\documentclass[12pt]{amsart}

\usepackage{fullpage, graphicx, amsmath, amssymb}

\newtheorem{thm}{Theorem}[section]
\newtheorem{prop}[thm]{Proposition}

\newtheorem{lemma}[thm]{Lemma}
\newtheorem{cor}[thm]{Corollary}
\newtheorem{definition}[thm]{Definition}
\theoremstyle{definition}

\title{A Hopf algebraic approach to Schur function identities}

\author{Karen Yeats}
\thanks{The author would like to thank Stephanie van Willigenburg for useful conversations without which these results would not exist.}

\begin{document}
\maketitle

\begin{abstract}
Using cocommutativity of the Hopf algebra of symmetric functions, certain skew Schur functions are proved to be equal.  Some of these skew Schur function identities are new.
\end{abstract}

\section{Introduction}
The Schur symmetric functions are a particularly nice basis for symmetric functions.  Skew Schur functions are also very nice symmetric functions, but contain more mysteries.  The full set of relations between skew Schur functions is so far from known that even the question of when two skew Schur functions indexed by different skew shapes are equal is not fully understood.  

A characterization of when two skew Schur functions are equal is known in the case of skew ribbons \cite{BTvW}.  A partial but quite structural class of skew Schur function identities is given in \cite{mNvW}.  

This paper uses very different techniques to find equalities between skew Schur functions.  Some of the equalities obtainable with these new techniques are new.  However, many of the identities found in \cite{mNvW} are not obtainable with the results of this paper, so these new techniques complement the old ones rather than overtaking them.

The problem of skew Schur identities can also be seen as a special case of a question of Schur positivity.  A symmetric function is said to be Schur positive if the coefficients when written in terms of the basis of Schur functions are all nonnegative.  Symmetric function identities then form the equality case.  The special kind of identities considered above, equalities between two skew Schur functions, forms the equality case of the question of when the difference between two skew Schur functions is Schur positive.  Both Schur positivity and Schur positivity of differences specifically are questions which have had much study, see for example \cite{KWvW, LPPschur, mNnecess, mNquasi} and the references therein.

\section{Preliminaries and composition of shapes}

Given a partition $\lambda = (\lambda_1 \geq \lambda_2 \geq \cdots \geq \lambda_i)$ its \textbf{shape} (or Ferrers diagram or Young diagram)  is a set of left aligned rows of boxes with $\lambda_1$ boxes in the top row, $\lambda_2$ boxes in the second from the top row and so on.  By abuse of notation the shape of $\lambda$ will also be denoted $\lambda$.  Given another partition $\mu = (\mu_1 \geq \mu_2 \geq \cdots \geq \mu_j)$ if $j \leq i$ and $\mu_k \leq \lambda_k$ for $1\leq k \leq j$ then we say $\mu \leq \lambda$ and we can form the \textbf{skew shape} $\lambda/\mu$ by taking the boxes that appear in the shape of $\lambda$ but not $\mu$ when they are aligned with their top left corners coinciding.  A skew shape may or may not be connected.  The \textbf{size} of a shape is the number of boxes.  A \textbf{ribbon} is a shape that does not contain four boxes in a square, that is it does not contain the partition $(2,2)$ as a subshape.  \textbf{Young's lattice} is the poset of partition shapes ordered by the $\leq$ defined above.

Let $k$ be a commutative ring.  The \textbf{ring of symmetric functions} over $k$ is the subring of $k[[x_1, x_2, \ldots]]$ consisting of power series that are invariant under all permutations of the indeterminates and that are of bounded degree.  See \cite{Stv2} for details.

A \textbf{filling} of a shape (skew or not) is an assignment of positive integers to the boxes of the shape with the property that the integers are weakly increasing along the rows and strictly increasing along the columns.  To each filling we can associate a monomial in $k[[x_1, x_2, \ldots]]$ where the power of $x_i$ is the number of occurrences of $i$ in the filling.  Given a partition shape $\lambda$ the \textbf{Schur function} associated to $\lambda$ is the sum of all monomials associated to fillings of $\lambda$.  This turns out to be a symmetric function and in fact the Schur functions of partition shape form a basis for symmetric functions.  Given a skew shape $\lambda/\mu$ the associated \textbf{skew Schur function} is formed analogously.  Henceforth the term \emph{Schur function} will be used to refer to both the skew and non-skew cases.

If $\alpha$ and $\beta$ are shapes whose associated (skew) Schur functions are equal then we write
\[
\alpha \sim \beta
\]

Given a shape $\alpha$ (skew or not), rotating 180 degrees also gives a shape which is denoted $\alpha^*$.  It is a standard fact that $\alpha\sim \alpha^*$.  This can be seen by considering the relationship between the fillings in each case.  One trivial but 
useful consequence of this is that the Schur functions associated to 180 degree rotations of partition shapes also form a basis for symmetric functions (in fact the same basis).

Ribbon Schur functions are easier to understand than general skew Schur functions.  Two particular facts about ribbon Schur functions will be important in the following, see \cite{BTvW} for proofs.  First, ribbon Schur functions span symmetric functions.  Second the product of two ribbon Schur functions is easy to describe.  Specifically if $\alpha$ and $\beta$ are two ribbons, then the product of the Schur functions associated to $\alpha$ and $\beta$ is the sum of the ribbon Schur functions associated to the ribbon obtained by putting the leftmost box of $\beta$ immediately to the right of the rightmost box of $\alpha$ and to the ribbon obtained by putting the bottommost box of $\beta$ immediately above the topmost box of $\alpha$.

\medskip

We will need some more technical notation and definitions from \cite{mNvW}.  If we index the boxes of a shape by rows and columns, a \textbf{diagonal} of the shape is the set of boxes of the shape with the same difference between the row and column indices.

Given shapes $W$ and $\alpha$, say that $W$ lies in the top of $\alpha$ if $W$ appears as a connected subdiagram of $\alpha$ that includes
the northeasternmost box of $\alpha$.  Likewise $W$ lies in the bottom of $\alpha$ if it includes the southwesternmost box of $\alpha$.

Given two shapes $\alpha_1$ and $\alpha_2$ and a shape $W$ lying in the top of $\alpha_1$ and in the bottom of $\alpha_2$, the \textbf{amalgamation} of $\alpha_1$ and $\alpha_2$ along $W$, written $\alpha_1 \amalg_W \alpha_2$, is the shape obtained by placing $\alpha_1$ on top of $\alpha_2$ with the copies of $W$ identified.  This is Definition 3.5 of \cite{mNvW}.

A shape $\alpha$ is a  $W\rightarrow O \rightarrow W$ shape if
\begin{itemize}
  \item $W$ lies both in the top and in the bottom of $\alpha$,
  \item removing either $W$ leaves a connected shape,
  \item the subshape resulting from removing both $W$ shapes is $O$,
  \item there is at least one diagonal strictly between the two copies of $W$, and
  \item the southwesternmost box of $O$ has a box of the lower $W$ immediately to the west and the northeasternmost box of $O$ has a box of upper $W$ immediately to the east.
\end{itemize}
A shape $\alpha$ is a $W\uparrow O \uparrow W$ shape if it satisfies the first four properties above, and instead of the last property satisfies
\begin{itemize}
  \item the southwesternmost box of $O$ has a box of the lower $W$ immediately to the south and the northeasternmost box of $O$ has a box of upper $W$ immediately to the north.
\end{itemize}
Without loss of generality, McNamara and van Willigenburg also suppose that $W$ is maximal in the sense that no other shape contains $W$, satisfies the first two properties above, and occupies the same diagonals as $W$.
See pp 9,10 of \cite{mNvW} for details.

Their goal is to define a composition of shapes where copies of the second shape are put together according to the first shape.  To do that they need two operations to correspond to the left-right and up-down relation of boxes in the first shape.  These two operations are amalgamation, defined above, and the following operation, which is a sort of shifted amalgamation.

Suppose $\alpha$ is a $W\rightarrow O \rightarrow W$ shape and $\alpha_1$ and $\alpha_2$ are copies of $\alpha$.  Then $\alpha_1 \cdot_W \alpha_2$ is the shape where $\alpha_2$ is positioned so that its lower copy of $W$ is one position northwest of the upper copy of $W$ in $\alpha_1$.  Similarly if $\alpha$ is a $W\uparrow O \uparrow W$ shape then $\alpha_1 \cdot_W \alpha_2$ is the shape where $\alpha_2$ is positioned so that its lower copy of $W$ is one position southeast of the upper copy of $W$ in $\alpha_1$.

Now we are ready to define a modified version McNamara and van Willigenburg's composition.  
\begin{definition}[half of Definition 3.11 \cite{mNvW} modified]
Let $\alpha$ and $\beta$ be shapes and suppose $\beta$ is $W\rightarrow O \rightarrow W$ or $W\uparrow O \uparrow W$.  Then $\alpha \circ_W \beta$ (or $\alpha \circ \beta$ when $W$ is understood) is the shape defined as follows.  
\begin{itemize}
  \item There is a copy of $\beta$ for each box of $\alpha$, subject to some overlapping detailed below.  Write $\beta_b$ for the copy of $\beta$ corresponding to box $b$ of $\alpha$.
  \item If $b_1$ is a box of $\alpha$ immediately to the west of box $b_2$ then
    \begin{itemize}
    \item if $\beta$ is $W\rightarrow O\rightarrow W$ then  $\beta_{b_1}$ and $\beta_{b_2}$ overlap according to $\beta_{b_1} \amalg_W \beta_{b_2}$,
    \item if $\beta$ is $W\uparrow O \uparrow W$ then $\beta_{b_1}$ and $\beta_{b_2}$ overlap according to $\beta_{b_1} \cdot_W \beta_{b_2}$.
    \end{itemize}
  \item If $b_1$ is a box of $\alpha$ immediately to the south of box $b_2$ then
    \begin{itemize}
      \item if $\beta$ is $W\rightarrow O\rightarrow W$ then $\beta_{b_1}$ and $\beta_{b_2}$ overlap according to $\beta_{b_1} \cdot_W \beta_{b_2}$,
      \item if $\beta$ is $W\uparrow O \uparrow W$ then $\beta_{b_1}$ and $\beta_{b_2}$ overlap according to $\beta_{b_1} \amalg_W \beta_{b_2}$.
    \end{itemize}
\end{itemize}
\end{definition}

This differs from McNamara and van Willigenburg in that the roles of $\cdot_W$ and $\amalg_W$ have been swapped for $W\uparrow O \uparrow W$ shapes.  This means that for $W\uparrow O \uparrow W$ shapes $\beta$ what I write as $\alpha \circ \beta$ corresponds to what McNamara and van Willigenburg write as $\alpha^*\circ \beta$ and conversely. This makes no difference to the overall results since both in this paper and in their paper the ultimate results are about when $\alpha\circ\beta \sim \alpha^*\circ\beta$.  The definition given here is more convenient for the present purposes because then the role of the top and bottom key ribbons of Section~\ref{sec results} match in the $W\rightarrow O \rightarrow W$ cases and the $W\uparrow O \uparrow W$ cases rather than being swapped.  See Figure~\ref{fig all compositions} for a schematic of this composition.

\begin{figure}
  \includegraphics{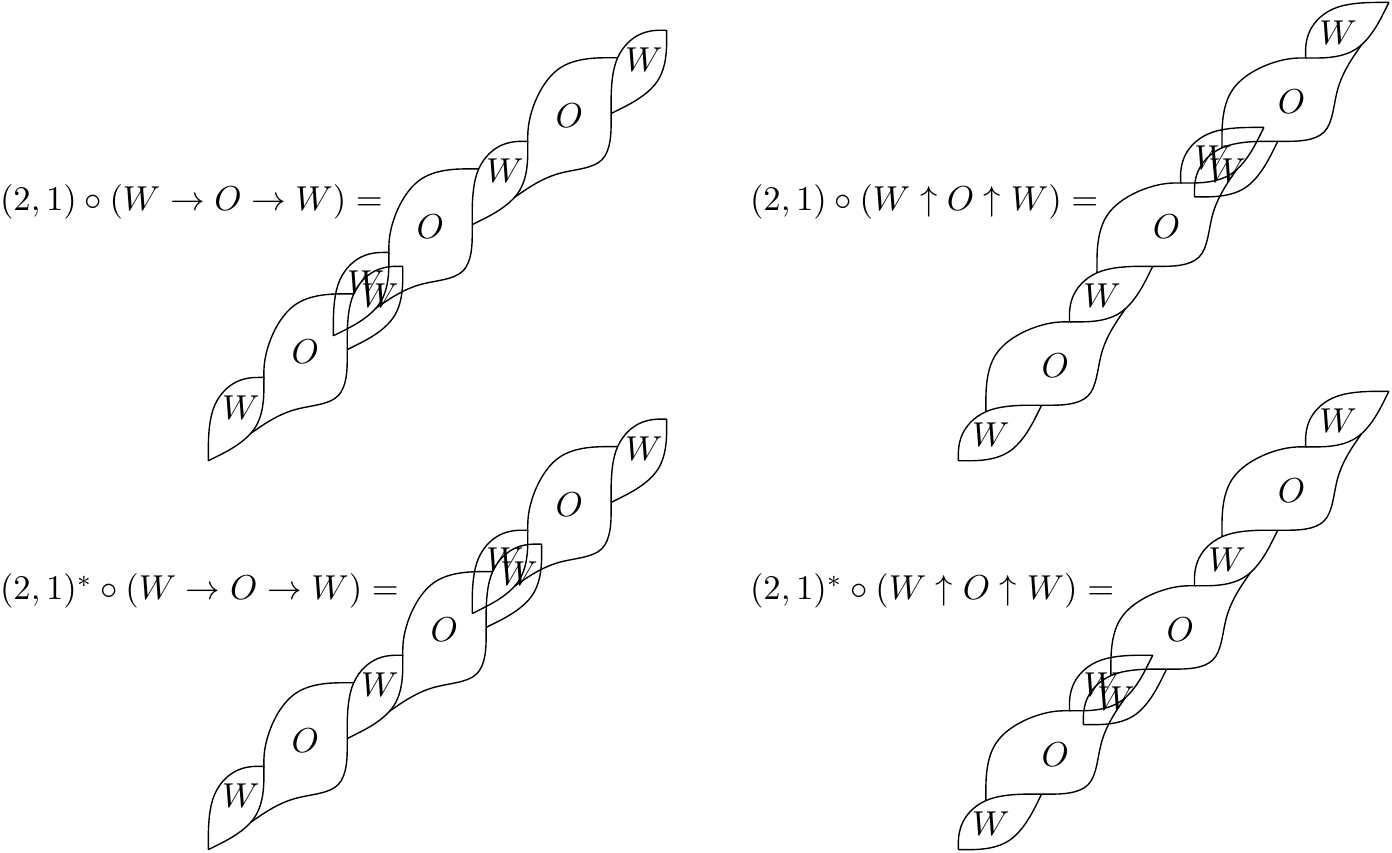}
  \caption{Schematic examples of the composition of shapes.}\label{fig all compositions}
\end{figure}

McNamara and van Willigenburg also define $W\uparrow O \rightarrow W$ and $W\rightarrow O \uparrow W$ shapes.  These shapes have a somewhat different feel.  The main results of \cite{mNvW} are cases where compositions involving the different $WOW$ shapes are $\sim$-equivalent.  The results of this paper are also of this form.  However, the techniques are quite different and it turns out that the class of identities that can be proved is also different.

\section{The shape Hopf algebra}
The approach of this paper relies crucially on the cocommutativity of the Hopf algebra of symmetric functions.

The Hopf algebra of symmetric functions is one of the classical combinatorial Hopf algebras \cite{GRhopf}.  If we think of symmetric functions in terms of Schur functions and then think of Schur functions in terms of the skew shapes which index them, then we can interpret the Hopf algebra of symmetric functions as a Hopf algebra operating on shapes.  

However, the full set of identities between Schur functions is not known and the Hopf algebra of symmetric functions viewed in terms of shapes also incorporates these identities which we do not know.
Consequently, it will be helpful to think of the Hopf algebra of symmetric functions as a quotient of a more naive Hopf algebra defined in terms of shapes but without such identities.  This is also a classical combinatorial Hopf algebra, namely it is the Hopf algebra of intervals in Young's lattice.  For a nice presentation of Hopf algebras of intervals in a poset see \cite{BMSvW} Theorem 2.1 and immediately before.  
For the present purposes, we are interested in taking the commutative version of this construction and in including skew shapes.  The resulting Hopf algebra can be defined directly as follows.

Let $\mathcal{P}$ be the set of all partition shapes.  Let $\mathcal{S}$ be the set of all shapes, both skew and non-skew, connected and not connected.  Let $\mathcal{C}$ be the set of connected shapes, skew and non-skew.  Let $k$ be the base field.\footnote{In fact as some authors such as Grinberg and Reiner \cite{GRhopf} have commented, the basic theory of combinatorial Hopf algebras also works over a commutative ring and often working over the ring $k=\mathbb{Z}$ is useful.}
Consider $k[\mathcal{C}]$.  If we view a disconnected skew shape as identified with the monomial of its connected components, then the multiplication in $k[\mathcal{C}]$ lives in a quotient of the vector space generated by $\mathcal{S}$, specifically the quotient given by identifying all disconnected skew shapes with the same multiset of connected components.  Note that this already captures one easy family of identities of skew Schur functions, namely if $\lambda$ is the disjoint union of $\mu$ and $\nu$ then $\lambda \sim \mu\nu$, that is skew Schur functions are multiplicative on disjoint unions.

Define the coproduct on shapes to match the coproduct of Schur functions, that is for a skew shape $\lambda/\mu$
\[
\Delta(\lambda/\mu) = \sum_{\mu \leq \eta \leq \lambda} \eta/\mu \otimes \lambda/\eta
\]
This defines $\Delta$ on $\mathcal{C}$, and so we define it on $k[\mathcal{C}]$ by extending as an algebra homomorphism.  
This is a sensible definition since this definition of 
$\Delta$ on disconnected shapes agrees with $\Delta$ on the product of the connected components.
Define the counit by  $\epsilon(\lambda/\mu) = 0$ provided $\lambda/\mu$ contains at least one box, $\epsilon(1) = 1$, and extend linearly.  These operations satisfy the axioms of a bialgebra, and the resulting bialgebra is graded and connected hence a Hopf algebra as well; see \cite{GRhopf} or some of the original combinatorial Hopf algebra work \cite{JRhopf, Santipode} for details.

Then $k[\mathcal{C}]$ with $\Delta$ and $\epsilon$ is the commutative version of the Hopf algebra of intervals of the poset of shapes (including skew shapes).  Call this Hopf algebra the \textbf{shape Hopf algebra}.  Note that the shape Hopf algebra is not cocommutative.

There is a map from the shape Hopf algebra to the symmetric function Hopf algebra given by taking each shape to its Schur function and extending linearly.  This map is an algebra homomorphism since the Schur function of a disconnected shape is the product of the Schur functions of the components and the map takes the element $1$ to $1$.  This map is a coalgebra homomorphism because the shape coproduct and counit were defined to match with those of the Schur functions.  Thus the map is a bialgebra homomorphism.  Any bialgebra map between two Hopf algebras is a Hopf algebra homomorphism, and so the map is a Hopf algebra homomorphism.  Further, the map is surjective onto symmetric functions and so by the Hopf algebra isomorphism theorems the symmetric function Hopf algebra is a quotient of the shape Hopf algebra.  In particular it is the quotient of the shape Hopf algebra given by forcing products to be what they are for Schur functions by imposing the required ideal of identities.  See \cite{eh} for similar quotients in quasi-symmetric functions.


\section{Results}\label{sec results}

Let $\beta$ be a partition shape that is a rectangle with the lower right corner box removed.  Let $\gamma$ be a $W\rightarrow O \rightarrow W$ or $W\uparrow O \uparrow W$ shape.  The main result is that $\beta \circ \gamma \sim \beta^* \circ \gamma$ (Theorem~\ref{main thm}) provided the ends of the composition do not have any extraneous ribbons (see Definition~\ref{def loose ends}).  The main tool is the cocommutativity of the Hopf algebra of symmetric functions.  

\begin{definition}
Given a $W\rightarrow O \rightarrow W$ or $W\uparrow O \uparrow W$ shape $\gamma$, the \textbf{top key ribbon} of $\gamma$ is the connected ribbon defined as follows
\begin{enumerate}
  \item If $\gamma$ is $W\rightarrow O \rightarrow W$ the top key ribbon is the part of the upper left ribbon of $\gamma \amalg_W \gamma$ from the first box after the $O$ of the first $\gamma$ up to and including the last box of the $O$ of the second $\gamma$.
  \item If $\gamma$ is $W\uparrow O \uparrow W$ the top key ribbon is the part of the upper left ribbon of $\gamma \amalg_W \gamma$ from the first box of the $O$ of the first $\gamma$ up to and including the last box not in the $O$ of the second $\gamma$.
\end{enumerate}
Define the \textbf{bottom key ribbon} of $\gamma$ to be the following connected ribbon
\begin{enumerate}
  \item If $\gamma$ is $W\uparrow O \uparrow W$ the bottom key ribbon is the part of the bottom right ribbon of $\gamma \amalg_W \gamma$ from the first box after the $O$ of the first $\gamma$ up to and including the last box of the $O$ of the second $\gamma$.
  \item If $\gamma$ is $W\rightarrow O \rightarrow W$ the bottom key ribbon is the part of the bottom right ribbon of $\gamma \amalg_W \gamma$ from the first box of the $O$ of the first $\gamma$ up to and including the last box not in the $O$ of the second $\gamma$.
\end{enumerate}
\end{definition}

The key ribbons are illustrated in Figure~\ref{key ribbons}.  See also Figure~\ref{eg fig} for how the key ribbons will appear in the main argument.

\begin{figure}
\includegraphics{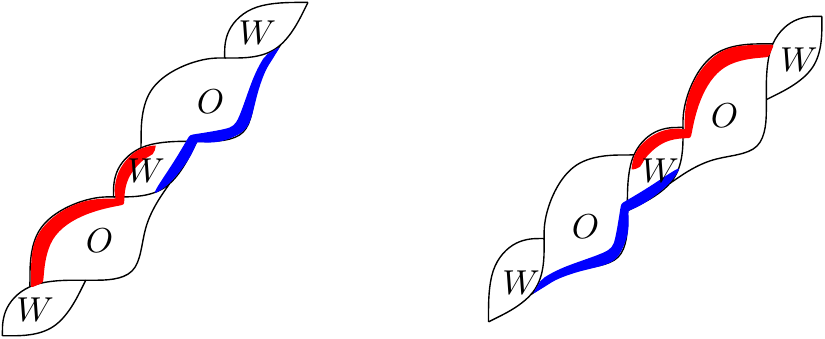}
\caption{Top key ribbons (red) and bottom key ribbons (blue) in $W\uparrow O\uparrow W$ and $W\rightarrow O \rightarrow W$ cases}\label{key ribbons}
\end{figure}

\begin{lemma}\label{size lemma}
The top key ribbon and bottom key ribbon of $\gamma$ have the same size, the same number of rows, and the same number of columns.
\end{lemma}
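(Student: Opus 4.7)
The plan is to pin down explicit endpoints of the top and bottom key ribbons as boxes of $\gamma\amalg_W\gamma$, compute their row and column spans, and show these spans agree; since each key ribbon is a connected ribbon and so has size equal to its number of rows plus its number of columns minus one, this will automatically give equality of sizes as well.

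First, $\gamma\amalg_W\gamma$ decomposes from top to bottom into five consecutive pieces: the upper $W$, the copy $O_U$ of $O$ coming from the upper $\gamma$, the shared copy of $W$, the copy $O_L$ of $O$ coming from the lower $\gamma$, and the lower $W$. Because the two copies of $\gamma$ are congruent and are amalgamated along $W$, the sub-shape $O_L$ is obtained from $O_U$ by a translation $(v_r,-v_c)$ with $v_r,v_c>0$, an offset determined entirely by the position of the upper $W$ relative to the lower $W$ inside a single copy of $\gamma$.

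In the $W\rightarrow O\rightarrow W$ case, the top key ribbon begins on the upper-left ribbon of $\gamma\amalg_W\gamma$ at the first box of the shared $W$ reached after leaving $O_U$ and ends at the southwest box of $O_L$. The adjacency ``SW box of $O$ has a box of lower $W$ immediately to the west'' from the definition of $W\rightarrow O\rightarrow W$ places the starting box on the same row as the southwest box of $O_U$ and one column to its west; combined with the shift $(v_r,-v_c)$ between $O_U$ and $O_L$, this yields row span $v_r+1$ and column span $v_c$. Dually, the bottom key ribbon on the bottom-right ribbon begins at the northeast box of $O_U$ and ends at the box of the shared $W$ immediately east of the northeast box of $O_L$; the ``NE of $O$ has upper $W$ immediately to the east'' adjacency and the same shift give row span $v_r+1$ and column span $v_c$. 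So both ribbons have identical row count, column count, and hence size. The $W\uparrow O\uparrow W$ case is handled by the analogous argument using the vertical adjacency conditions; both key ribbons then turn out to have row span $v_r$ and column span $v_c+1$, and still agree.

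The main obstacle I anticipate is the case in which $W$ is not a single box but a larger maximal shape occupying several diagonals: then the shared $W$ is itself a nontrivial sub-shape of $\gamma\amalg_W\gamma$, and the upper-left and bottom-right ribbons can enter and exit it at several different boxes. The maximality of $W$ together with the explicit adjacency conditions should still pin the first-entry and last-exit boxes of the key ribbons to the rows and columns used above, so the row and column spans remain equal, but verifying this in full generality is the technical core of the argument.
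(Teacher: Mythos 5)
Your argument is essentially the paper's own: both key ribbons realize the translation carrying one copy of $O$ in $\gamma\amalg_W\gamma$ onto the other, and the adjacency conditions in the definitions of $W\rightarrow O\rightarrow W$ and $W\uparrow O\uparrow W$ force the single box by which each key ribbon overshoots or undershoots that translation to be horizontal in the first case and vertical in the second, so the row counts, column counts, and hence sizes (rows plus columns minus one) agree. The large-$W$ complication you flag at the end is not a real obstacle, because the argument never needs to know how the upper-left or bottom-right boundary passes through $W$ — only the displacement between the two copies of $O$ and the one adjacency at each $O$-endpoint matter — which is exactly how the paper's one-line proof phrases it: shifting one copy of $O$ by the key ribbon puts it onto the other copy of $O$, so both key ribbons determine the same shift.
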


\begin{proof}
In $\gamma \amalg_W \gamma$, for both the top key ribbon and the bottom key ribbon, shifting one copy of $O$ by the key ribbon puts it onto the other copy of $O$.  Hence both key ribbons determine the same shift and so have the same number of rows and the same number of columns.  Since they are both ribbons they are also the same size.
\end{proof}

\begin{definition}
Given any shape $\lambda$, say a shape $\mu$ can be \textbf{taken out on the left} if the term of the form $\mu \otimes A$ in $\Delta(\lambda)$ is nonzero in the shape Hopf algebra.  Say $\mu$ can be \textbf{taken out on the right} if the term of the form $A \otimes \mu$ in $\Delta(\lambda)$ is nonzero in the shape Hopf algebra.
\end{definition}

To simplify the proof of Lemma~\ref{one key} it is worth making a digression into infinite shapes.  Extend the notion of taking out on the right and left to infinite shapes as follows.
Define an embedded infinite shape to be a set of boxes in the integer lattice $\mathbb{Z}^2$ such that every truncation given by restricting to $\{-N, \ldots, -1, 0, 1, \ldots, N\}^2$ is a shape (partition or skew).  The set of infinite shapes is then the set of embedded infinite shapes up to finite translation.  The restriction of an embedded infinite shape $\lambda$ to $\{-N, \ldots, -1, 0, 1\ldots, N\}^2$ will be denotes $\lambda_N$.
Say a shape $\mu$ can be taken out on the left of an infinite shape $\lambda$ if for any embedding of $\lambda$ there exists an embedding of $\mu$ in $\mathbb{Z}^2$ (independent of $N$) such that for all sufficiently large truncations this copy of $\mu$ gives a nonzero term of the form $\mu \otimes A$ in $\Delta(\lambda_N)$ in the shape Hopf algebra.  Taking out on the right in infinite shapes can be defined similarly.  Note that it is important that the embedding of $\mu$ be fixed for all $N$; is not sufficient to just require that $\mu$ can be taken out on the left in every truncation because there could be a different copy of $\mu$ on the border of each truncation but no $\mu$ coming out of the infinite shape.  For example $\mu = (1)$ cannot be taken out on the left or the right of the infinite vertical strip though it can be taken out of any truncation.

\begin{lemma}\label{lem infinite strip}
  Let $c$ be the infinite 1 column shape and let $d$ be the infinite 1 row shape.  Let $n$ be the size of the key ribbons of $\gamma$.  Then there are no connected ribbons of size $n$ which can be taken out on left nor which can be taken out on the right in either $c\circ \gamma$ or $d\circ \gamma$.
\end{lemma}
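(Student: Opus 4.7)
The plan is to reformulate ``takes out on the left of $L$'' (where $L=c\circ\gamma$ or $L=d\circ\gamma$) as the purely geometric condition that the embedded $\mu$ be a downward-closed subset of $L$ in the northwest partial order on boxes, where $b_1\leq b_2$ iff $b_1$ is weakly northwest of $b_2$; similarly ``takes out on the right'' becomes upward-closed. The justification is short: for fixed finite $\mu$, the set of boxes of $L$ lying northwest of some box of $\mu$ is itself fixed, and once $N$ is large enough this set sits entirely inside $L_N$; so the order-ideal condition for $\mu$ inside $L_N$ stabilizes as $N\to\infty$ to the same condition inside the infinite $L$. This reduction turns the lemma into a statement about the poset structure of $L$.

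With this in hand, I would split into two subcases. In the straight-strip subcase, which occurs precisely when the operation used along the infinite direction of $c$ or $d$ produces no drift transverse to that direction (so for example $c\circ\gamma$ when $\gamma$ is $W\uparrow O\uparrow W$ with the two $W$s in the same column, and symmetrically for $d\circ\gamma$), every box of $L$ has an infinite northwest ray of predecessors inside $L$, and hence no nonempty finite subset of $L$ is downward-closed. The lemma is immediate here, and symmetry (via maximal elements and upward closure) handles taking out on the right. In the remaining drifting subcase $L$ is an infinite diagonal strip in which finite downward-closed subsets do exist, but the downward-closure requirement forces any connected one to be an iterated ``L-shape'' extension of a single base box along the northwest staircase of $L$, each extension adding a fixed block of cells determined by the drift vector of the composition. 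Hence the sizes of connected ribbon order ideals of $L$ lie in an arithmetic progression $1,\,1+k,\,1+2k,\ldots$ for some $k\geq 1$ depending on $\gamma$. Comparing this progression against the explicit construction of the key ribbon, which by definition records a specific portion of the upper-left ribbon of $\gamma\amalg_W\gamma$ straddling the $O$s of the two copies, one sees that $n$ is not of the form $1+jk$. A symmetric argument using upward closure, the southeast partial order, and the bottom key ribbon rules out taking ribbons of size $n$ out on the right.

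The hard part is the size-mismatch verification at the end: showing that the key ribbon length $n$ is incompatible with the arithmetic progression of achievable staircase sizes in $L$. Both quantities are combinatorially transparent once written down, but there are four cases (the two types of $\gamma$ combined with composition against $c$ or $d$), and each requires identifying the drift vector produced by $\cdot_W$ or by the inherent shift of $\amalg_W$ when the two copies of $W$ sit in different columns (resp.\ rows), and then matching it to the key-ribbon recipe. I expect each case to reduce to a single short counting identity, but keeping the bookkeeping straight across the four cases will be where most of the effort lies.
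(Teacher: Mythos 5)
Your first reduction is fine: ``can be taken out on the left'' is indeed equivalent to being a finite down-set of the infinite shape in the componentwise (northwest) order, and this is equivalent to being a down-set of all sufficiently large truncations (though when the set of boxes northwest of $\mu$ is infinite you should argue box-by-box rather than saying it ``sits entirely inside $L_N$''). The straight-strip observation is also fine once you note that $c\circ\gamma$ and $d\circ\gamma$ are periodic, which is what guarantees the infinite ray of predecessors. The genuine gap is in the drifting case, which is the whole content of the lemma. Your structural claim that the connected ribbon order ideals have sizes in a single arithmetic progression $1,\,1+k,\,1+2k,\ldots$ is false. Take $\gamma$ a single row of four boxes with $W$ the two end boxes (a $W\rightarrow O\rightarrow W$ shape); then $c\circ\gamma$ consists of rows of four, each successive row shifted two columns, and the connected ribbons removable on the left have sizes $1,2,4,5,7,8,\ldots$ --- two residue classes modulo $3$, not one progression, and not built by repeatedly adjoining a fixed block to a single base box. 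More importantly, the decisive step --- that the key-ribbon size $n$ is never an achievable size --- is exactly what you defer to ``a single short counting identity'' in four cases, so the statement of the lemma is not actually proved; and in your framework there is no identified reason why $n$ should miss the achievable sizes, since you never relate $n$ to the geometry of $L$.

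The missing idea, which is how the paper argues, is that the upper-left (resp.\ lower-right) boundary ribbon of $c\circ\gamma$ and $d\circ\gamma$ is periodic with period exactly $n$: by definition the key ribbon is a fundamental domain of this boundary between consecutive copies of $\gamma$. A removable connected ribbon $t$ must be an interval of this boundary ribbon whose first box lies directly above the preceding boundary box (otherwise the preceding box is weakly northwest of $t$ and not in $t$). If $|t|=n$, exact periodicity forces the box immediately following $t$ to lie directly above the last box of $t$, so a shape box outside $t$ is northwest of a box of $t$, contradicting removability; the right-hand and $d\circ\gamma$ cases are symmetric. This local two-ended argument replaces both your classification of all order-ideal sizes (which is false as stated) and the unperformed four-case size count; if you want to salvage your route, the correct statement to prove is not a single progression but merely that no removable interval has length a multiple of the boundary period, which is again the periodicity argument.
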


\begin{proof}
  Suppose $t$ is a ribbon of size $n$ which can be taken out on the right of $c\circ \gamma$.

  The infinite ribbon given by the upper left edge of $c\circ \gamma$ is periodic.  By the definition of the key ribbon, the period is $n$.  Since $t$ can be taken out on the right $t$ is a finite connected subribbon of this infinite ribbon.  Taking any truncation of $c\circ \gamma$ which includes at least one $\gamma$ on each side of $t$ does not affect whether or not $t$ comes out.   Since $|t| =n$ this means that the last box before $t$ and the first box in $t$ are in the same local situation as the last box of $t$ and the first box not in $t$.  Since $t$ can be taken out, its first box must be above the box before it and so the last box of $t$ is trapped by the box above it contradicting the fact that it can be taken out.

  The argument is analogous for the other cases.
\end{proof}

\begin{lemma}\label{lem infinite one key}
  Let $h$ be an infinite shape such that every truncation is a partition shape and such that the numbers of rows in the truncations are unbounded and the numbers of columns in the truncations are unbounded.
  \begin{enumerate}
  \item Let $r$ be the top key ribbon of $\gamma$.
The only connected ribbon of size $|r|$ that can be taken out on the left of $h \circ \gamma$ is the copy of $r$ that appears in the copy of $\gamma$ corresponding to the top left box of $h$. 
\item Let $s$ be the bottom key ribbon of $\gamma$.  The only connected ribbon of size $|s|$ that can be taken out on the right of $h^* \circ \gamma$ is the copy of $s$ that appears in the copy of $\gamma$ corresponding to the bottom right box of $h^*$. 
\end{enumerate}
\end{lemma}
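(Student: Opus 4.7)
The plan is to adapt the periodicity-and-trapping argument of Lemma~\ref{lem infinite strip} to the case at hand, where the ``infinite strip'' has a corner at the northwest provided by the copy of $\gamma$ sitting at the top-left box of $h$. Write $b_0$ for the top-left box of $h$ and $\gamma_{b_0}$ for its corresponding copy of $\gamma$ in $h\circ\gamma$. I will treat part~(1) in detail; part~(2) is entirely analogous, with ``top-left'' replaced by ``bottom-right'' throughout.

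First I verify that the copy of $r$ sitting inside $\gamma_{b_0}$ is itself a connected ribbon of size $|r|$ that can be taken out on the left of $h\circ\gamma$. This is essentially by construction: by definition of the top key ribbon, $r$ is a connected portion of the upper-left edge of $\gamma$ sitting at the very top of $\gamma$, and once $\gamma_{b_0}$ occupies the northwest corner of $h\circ\gamma$, this copy of $r$ is a connected ribbon on the upper-left edge of $h\circ\gamma$ whose removal leaves a valid shape.

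Now let $t$ be any connected ribbon of size $n = |r|$ taken out on the left of $h\circ\gamma$. Then $t$ is a connected subribbon of the upper-left edge of $h\circ\gamma$. Because $h$ has unbounded rows and columns, this edge extends infinitely east along the top row of $h$ and infinitely south along its left column, and it is periodic with period $n$ outside a neighborhood of $\gamma_{b_0}$; this periodicity is exactly what the top key ribbon definition packages. If $t$ lies entirely east of $\gamma_{b_0}$, then a sufficiently large truncation of $h$ makes the local situation around $t$ identical to that in $d\circ\gamma$, and the existence of $t$ contradicts Lemma~\ref{lem infinite strip}. The same reasoning with $c\circ\gamma$ rules out $t$ lying entirely south of $\gamma_{b_0}$. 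Hence $t$ must intersect $\gamma_{b_0}$.

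It remains to show that in this case $t$ is the copy of $r$ in $\gamma_{b_0}$. Here I reapply the trapping argument of Lemma~\ref{lem infinite strip}: if the first box of $t$ (reading along the upper-left edge from northwest) is not the overall northwesternmost box of $h\circ\gamma$, there is a box of the upper-left edge immediately before $t$, and since $|t| = n$ is exactly the period, the local configurations at the two ends of $t$ agree, forcing a trapped box just as in Lemma~\ref{lem infinite strip}. Only the copy of $r$ in $\gamma_{b_0}$, whose first box coincides with the northwesternmost box of $h\circ\gamma$ and therefore has no predecessor, escapes this obstruction. The main difficulty I expect is in this last step: carefully handling size-$n$ subribbons that straddle $\gamma_{b_0}$ and a neighboring copy of $\gamma$, or sit partly inside $\gamma_{b_0}$ but shifted off the corner. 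For these, I will need to confirm that despite being near the non-periodic corner, the local configuration at each endpoint of $t$ is still governed by the periodic structure of the upper-left edge, so that the trapping argument of Lemma~\ref{lem infinite strip} applies unmodified.
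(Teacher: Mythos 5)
Your reduction of the "entirely east" and "entirely south" cases to Lemma~\ref{lem infinite strip} is fine and matches the paper's use of the two periodic regimes. The genuine gap is in the remaining case, which is the heart of the lemma, and the mechanism you propose there does not work. The upper-left edge of $h\circ\gamma$ is a \emph{doubly} infinite ribbon (the left column and top row of $h$ are both infinite), so every box of this edge has a predecessor along the edge; there is no box that "has no predecessor," and the leftmost box of the copy of $r$ in $\gamma_{b_0}$ is in general not an extreme corner box of $h\circ\gamma$ at all. Hence your explanation of why $r$ alone escapes the trapping obstruction --- that its first box is the northwesternmost box and so has nothing before it --- is false, and with it the claimed uniqueness argument for ribbons meeting $\gamma_{b_0}$ collapses. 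The correct reason, which is what the paper proves, is different: reading along the edge, the column periodicity and the row periodicity each extend right up to the copy of the key ribbon, and the first box not belonging to the column-periodic part is the rightmost box of that key ribbon while the last box not belonging to the row-periodic part is its leftmost box. Consequently any removable $t$ of size $n$, which cannot sit inside either periodic part by the trapping argument, must contain both of these boxes, hence contains the whole key ribbon, hence equals it since $|t|=|r|$. The key ribbon itself escapes the obstruction not because it lacks a neighbour but because the period-$n$ comparison of "local situations" at its two ends fails exactly across this transition window.

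Note also that you explicitly defer this point ("I will need to confirm that \dots the local configuration at each endpoint of $t$ is still governed by the periodic structure"), but that is precisely the statement that is false for $t=r$ and that must be replaced by the identification of where each periodicity ends; ribbons straddling $\gamma_{b_0}$ and a neighbouring copy of $\gamma$, or sitting inside $\gamma_{b_0}$ shifted off the key ribbon, are handled only once that identification is made. So as written the proposal establishes the easy cases but leaves the decisive step unproved and supported by an incorrect picture of the corner; to repair it, replace the "no predecessor" argument by the analysis of the two periodic pieces and their boundary boxes as above.
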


\begin{proof}
First note that $h$ consists of a box, a 1 way infinite column beneath the box, a 1 way infinite row after the box, and potentially some boxes to the lower right of the boxes already given which locally maintain a partition shape.

  Suppose $t$ is a ribbon of size $|r|$ that can be taken out on the left of $h \circ \gamma$.

  The infinite ribbon given by the upper left edge of $h\circ \gamma$ is made of two periodic pieces; the upper left edges of the left column of $h$ composed with $\gamma$ and the upper left edges of the top row of $h$ composed with $\gamma$.  Boxes to the lower right in $h$ do not contribute to this ribbon.  Since $t$ can be taken out on the left $t$ is a finite connected subribbon of this infinite ribbon.

  By the definition of the key ribbon, the first box that is not part of the column periodicity is the rightmost box of the top key ribbon of the copy of $\gamma$ corresponding to the top left box of $h$.  Similarly, the last box that is not part of the row periodicity is the leftmost box of the top key ribbon of the copy of $\gamma$ corresponding to the top left box of $h$.  See Figure~\ref{fig end of per} for an example.

\begin{figure}
  \includegraphics{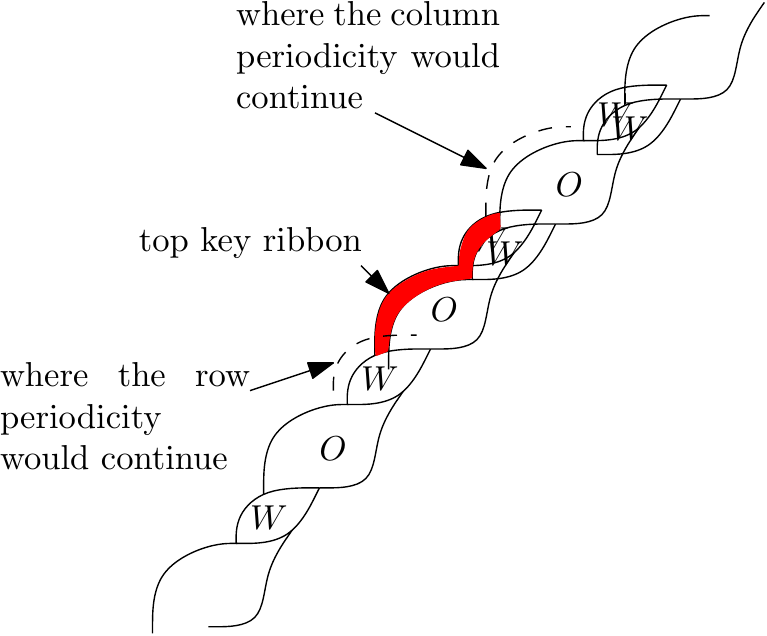}
  \caption{A schematic of the area around the copy of $\gamma$ corresponding to the top left box.}
  \label{fig end of per}
\end{figure}

  As in Lemma~\ref{lem infinite strip} if $t$ is within either periodic part of the infinite ribbon then it cannot be taken out.  Thus, by the boundaries of the previous paragraph, $t$ must include the rightmost box and the leftmost box of the top key ribbon of the copy of $\gamma$ corresponding to the top left box of $h$.  Thus $t$ must be the top key ribbon of the copy of $\gamma$ corresponding to the top left box of $h$.

  The result for $h^*$ is analogous.
\end{proof}

We need one last technical restriction to characterize the class of shapes $\gamma$ which we can use.
\begin{definition}\label{def loose ends}
  \begin{enumerate}
  \item  Let $\gamma$ be a $W\rightarrow O \rightarrow W$ shape and let $n$ be the size of its key ribbons.
    Say that $\gamma$ has \textbf{no loose end ribbons} if no ribbon of length $n$ which begins to the left of the top key ribbon can be taken out on the left and no ribbon of length $n$ which ends to the right of the bottom key ribbon can be taken out on the right.
  \item Let $\gamma$ be a $W\uparrow O \uparrow W$ shape and let $n$ be the size of its key ribbons.
    Say that $\gamma$ has \textbf{no loose end ribbons} if no ribbon of length $n$ which ends to the right of the top key ribbon can be taken out on the left and no ribbon of length $n$ which begins to the left of the bottom key ribbon can be taken out on the right.
  \end{enumerate}
\end{definition}

\begin{lemma}\label{one key}
Let $\lambda$ be a partition shape and let $\gamma$ have no loose end ribbons.
\begin{enumerate}
\item Let $r$ be the top key ribbon of $\gamma$.  
Then, the only connected ribbon of size $|r|$ that can be taken out on the left of $\lambda \circ \gamma$ is $r$.  $r$ can be taken out exactly once and this is when it appears in the copy of $\gamma$ corresponding to the top left box of $\lambda$. 
\item Let $s$ be the bottom key ribbon of $\gamma$. 
  The only connected ribbon of size $|s|$ that can be taken out on the right of $\lambda^* \circ \gamma$ is $s$.  $s$ can be taken out exactly once and this is when it appears in the copy of $\gamma$ corresponding to the bottom right box of $\lambda^*$. 
\end{enumerate}
\end{lemma}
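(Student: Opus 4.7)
The plan is to derive Lemma~\ref{one key} from Lemma~\ref{lem infinite one key} by embedding $\lambda$ in a suitable infinite shape $h$, and then using the no-loose-end-ribbons hypothesis to close the gap between $\lambda\circ\gamma$ and $h\circ\gamma$ near the ends of $\lambda$. Concretely, for Part (1) I would position $\lambda$ at the top-left of $h$, where $h$ is the infinite upper-left quadrant of $\mathbb{Z}^2$; the truncation of $h$ to $\{1,\dots,N\}^2$ is the $N\times N$ rectangle, so every truncation is a partition shape and the numbers of rows and columns grow without bound, as required by Lemma~\ref{lem infinite one key}. Then $\lambda\circ\gamma$ sits inside $h\circ\gamma$, and in particular the two shapes share the copy of $\gamma$ at the top-left corner together with the copy of $r$ there.

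Now let $t$ be a connected ribbon of size $|r|$ that can be taken out on the left of $\lambda\circ\gamma$; then $t$ lies along the upper-left edge of $\lambda\circ\gamma$. The first case is that $t$ lies entirely in the region where this edge agrees locally with the upper-left edge of $h\circ\gamma$. Writing $\lambda\circ\gamma=\Lambda/M$ and a sufficiently large truncation of $h\circ\gamma$ as $\Lambda_N/M_N$, I would observe that $M$ and $M_N$ coincide in a neighborhood of $t$ and that $\Lambda_N\supseteq\Lambda$, so if $M+t$ is a partition inside $\Lambda$ then $M_N+t$ is a partition inside $\Lambda_N$. Hence the same embedding of $t$ gives a takeout on the left of $h\circ\gamma$, and Lemma~\ref{lem infinite one key}(1) forces $t$ to be the copy of $r$ in the top-left copy of $\gamma$. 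That this copy of $r$ is actually taken out exactly once in $\lambda\circ\gamma$ follows by applying the same comparison in reverse.

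The remaining case is that $t$ reaches into an end region of $\lambda\circ\gamma$ whose boundary has no counterpart in $h\circ\gamma$, namely near the top-right end of $\lambda$'s top row or the bottom-left end of $\lambda$'s left column. Such a $t$ of length $n$ must begin, within the relevant copy of $\gamma$, to the left of the top key ribbon, and the no-loose-end-ribbons hypothesis on $\gamma$ is precisely the statement that no such ribbon can be taken out on the left; hence no $t$ exists in this case. Part (2) is proved by the analogous argument applied to $\lambda^*\circ\gamma$, with the bottom key ribbon $s$ in the bottom-right copy of $\gamma$ and the second clause of the no-loose-end-ribbons hypothesis. The main obstacle I anticipate is this end-region analysis: one must verify that the local geometry near each end of the upper-left edge genuinely falls under the intrinsic no-loose-end-ribbons condition on $\gamma$, so that the hypothesis can be invoked as stated.
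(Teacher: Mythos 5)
Your strategy is the same as the paper's: compare the upper-left edge of $\lambda\circ\gamma$ with that of $h\circ\gamma$ for a suitable infinite $h$, invoke Lemma~\ref{lem infinite one key} on the part where the two edges agree, and use the no-loose-end-ribbons hypothesis to kill ribbons that reach the ends. The local comparison argument in your first case (the inner shapes agree near $t$, so a takeout of $\lambda\circ\gamma$ there is also a takeout of a large truncation of $h\circ\gamma$) is fine and is implicit in the paper as well.

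The step you yourself flag as the main obstacle is, however, where your sketch is off, and it is exactly the content the paper supplies. The paper shows that the upper-left ribbon of $\lambda\circ\gamma$ is a finite section of the upper-left ribbon of $h\circ\gamma$ together with a single ``extra bit'' at \emph{one} end only, and which end it is depends on the type of $\gamma$: for a $W\rightarrow O\rightarrow W$ shape the extra bit sits at the southwest end, inside the copy of $\gamma$ corresponding to the lower left box of $\lambda$, and consists of boxes to the left of the top key ribbon; for a $W\uparrow O\uparrow W$ shape it sits at the northeast end, inside the copy corresponding to the upper right box of $\lambda$, and consists of boxes to the right of the top key ribbon. At the opposite end the edge of $\lambda\circ\gamma$ agrees with that of $h\circ\gamma$ through its terminus, so no hypothesis is needed there. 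Your blanket claim that an offending $t$ ``must begin, within the relevant copy of $\gamma$, to the left of the top key ribbon'' matches only the $W\rightarrow O\rightarrow W$ clause of Definition~\ref{def loose ends}; in the $W\uparrow O\uparrow W$ case the relevant clause is the one about ribbons that \emph{end to the right} of the top key ribbon, and your phrasing would have you invoking a clause the definition does not provide for that case. Moreover, treating both the top-right and bottom-left ends as potentially problematic for part (1) would demand more than the hypothesis gives (it controls only one kind of end per side of the tensor), so the observation that only one end carries an extra bit is not optional bookkeeping but is what makes the hypothesis suffice. With that end-region analysis made precise, the rest of your argument goes through as in the paper.
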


\begin{proof}
  Suppose $t$ is a ribbon of size $|r|$ that can be taken out on the left of $\lambda \circ \gamma$.

  Consider the upper left ribbon of $\lambda \circ \gamma$.  This ribbon is made of two pieces: first a finite section of the upper left ribbon of $h\circ \gamma$ from Lemma~\ref{lem infinite one key}, including the part corresponding to the upper left corner; and second, an extra bit.  If $\gamma$ is $W\rightarrow O \rightarrow W$ then this extra bit is on the left and comes from the part of the copy of $\gamma$ coming from the lower left box of $\lambda$ which is to the left of the top key ribbon.  If $\gamma$ is $W\uparrow O \uparrow W$ then this extra bit is on the right and comes from the part of the copy of $\gamma$ coming from the upper right box of $\lambda$ which is to the right of the top key ribbon.  See Figure~\ref{fig extra bit} for an example of the extra bit.

  By the assumption of no loose end ribbons $t$ cannot contain any boxes of this extra bit.  Then, by the arguments of Lemma~\ref{lem infinite one key}, $t$ must be the top key ribbon in the copy of $\gamma$ corresponding to the top left box of $\lambda$.  

The proof for $s$ is analogous.
\end{proof}

\begin{figure}
  \includegraphics{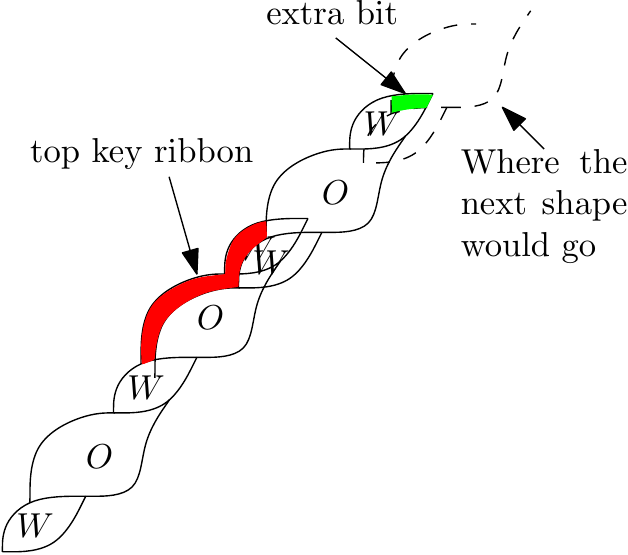}
  \caption{An example of the extra bit.}
  \label{fig extra bit}
\end{figure}

Given a basis for symmetric functions of a given degree it will be useful to have a notation for the coefficient vectors in this basis.  

\begin{definition}
  Let $\mathcal{B}$ be an ordered basis for symmetric functions consisting of Schur functions.  Let $\lambda$ be a shape.  Let $\overline{\lambda}_{\mathcal{B}}$ be the coefficient vector (written as a row) of the Schur function indexed by $\lambda$ relative to $\mathcal{B}$.
\end{definition}

Whenever convenient and without comment, when dealing with shapes of a particular size $n$, $\mathcal{B}$ and $\overline{\lambda}_{\mathcal{B}}$ will be viewed as restricted to size $n$ and hence finite.

\begin{lemma}\label{signed sum}
  Let $\mathcal{B}$ be an ordered basis for symmetric functions made up of connected ribbon Schur functions and let $\lambda$ be a shape that is not a connected ribbon.  Let $v$ be the vector of length $|\mathcal{B}|$ that is $1$ if the corresponding entry of $\mathcal{B}$ has an even number of rows and is $-1$ if the corresponding entry of $\mathcal{B}$ has an odd number of rows.  Then $v \cdot \overline{\lambda}_{\mathcal{B}} = 0$.
\end{lemma}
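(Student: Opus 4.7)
The plan is to recognise the signed sum $v \cdot \overline{\lambda}_{\mathcal{B}}$ as the value at $s_\lambda$ of a basis-independent linear functional on the degree-$n$ part of $\Lambda$, where $n = |\lambda|$, and then compute that functional directly using classical tools. The functional I have in mind is
\[
\phi(f) = -\langle f, p_n\rangle,
\]
where $p_n$ is the $n$th power-sum symmetric function and $\langle\cdot,\cdot\rangle$ is the Hall inner product on $\Lambda$.

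The first step is to show $\phi(s_R) = (-1)^{r(R)}$ for every connected ribbon $R$ of size $n$, where $r(R)$ denotes the number of rows of $R$. By the skew Murnaghan--Nakayama rule, $\langle s_R, p_n\rangle$ is the signed count of border-strip tableaux of shape $R$ with content $(n)$; the only such tableau treats $R$ as a single border strip of height $r(R)-1$, contributing $(-1)^{r(R)-1}$, so $\phi(s_R) = (-1)^{r(R)}$, which is exactly the entry of $v$ corresponding to $R \in \mathcal{B}$. Since $\mathcal{B}$ is a basis of $\Lambda_n$, expanding $s_\lambda = \sum_{R \in \mathcal{B}} c_R s_R$ and applying $\phi$ linearly yields $\phi(s_\lambda) = \sum_R c_R (-1)^{r(R)} = v \cdot \overline{\lambda}_{\mathcal{B}}$.

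The second step is to verify $\phi(s_\lambda) = 0$ whenever $\lambda$ is not a connected ribbon. There are two subcases. If $\lambda$ is disconnected with components of positive degree, then $s_\lambda$ is a nontrivial product of Schur functions, and the primitivity of $p_n$ in the Hopf algebra of symmetric functions, combined with the compatibility of the Hall inner product with the coproduct, forces $\langle s_\lambda, p_n\rangle = 0$. If $\lambda$ is connected but not a ribbon, then it contains a $2\times 2$ subshape, which no border strip can contain; so no border-strip tableau of shape $\lambda$ with content $(n)$ exists, and Murnaghan--Nakayama again gives $\langle s_\lambda, p_n\rangle = 0$. Either way, $v \cdot \overline{\lambda}_{\mathcal{B}} = \phi(s_\lambda) = 0$.

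There is no real obstacle in this plan: the only external inputs are the skew Murnaghan--Nakayama rule and the primitivity of $p_n$, both classical. The main conceptual move is the observation that the row-parity sign on ribbons is the restriction to the ribbon basis of the basis-independent functional $-\langle \cdot, p_n\rangle$; once that is in hand, the two short character computations above complete the argument, and the basis-independence of $v \cdot \overline{\lambda}_{\mathcal{B}}$ becomes transparent.
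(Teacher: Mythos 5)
Your proposal is correct, but it takes a genuinely different route from the paper. You identify the signed sum $v \cdot \overline{\lambda}_{\mathcal{B}}$ as the value of the basis-independent functional $-\langle \cdot\,, p_n\rangle$: the skew Murnaghan--Nakayama rule gives $\langle s_R, p_n\rangle = (-1)^{r(R)-1}$ for a connected ribbon $R$ (it is itself the unique border strip filling), while for $\lambda$ not a connected ribbon either primitivity of $p_n$ plus self-duality of the Hall pairing (disconnected case) or the absence of any spanning border strip (connected non-ribbon case, because of the $2\times 2$) forces $\langle s_\lambda, p_n\rangle = 0$; in fact the skew Murnaghan--Nakayama rule alone handles both subcases, since a content-$(n)$ border strip tableau exists only when the shape is a connected ribbon. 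The paper instead stays inside its own elementary toolkit: it proves the statement by a triple induction (on the number of $(2,2)$'s, then on the total size of the left and right ribbons, then on the size of the left ribbon), repeatedly adding a box, using cocommutativity to equate taking out one box on the left and on the right, and solving for the shape itself; the ribbon case rests on the fact that the product of two connected ribbons is a sum of two ribbons of opposite row parity and that all linear relations among ribbons are generated this way (Proposition 2.2 of \cite{BTvW}). Your argument is shorter, makes the basis-independence of $v \cdot \overline{\lambda}_{\mathcal{B}}$ transparent, and imports only classical facts; the paper's argument is longer but self-contained in the Hopf-algebraic spirit of the rest of the paper, using the same cocommutativity trick that drives Proposition~\ref{same key prop} and Theorem~\ref{main thm} and avoiding the Hall inner product and character theory altogether. (One minor point to note explicitly if you write this up: the coefficient vector $\overline{\lambda}_{\mathcal{B}}$ is rational, so applying your $\mathbb{Q}$-linear functional termwise is legitimate.)
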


\begin{proof}
There are two cases to consider, disconnected ribbons and non-ribbons (both connected and disconnected).

Consider a ribbon.  The product of two connected ribbons creates two connected ribbons whose number of rows differs by $1$ so one has an odd number of rows and one has an even number of rows.  Thus, using the equivalences in the Hopf algebra to convert any disconnected ribbon into connected ribbons gives a sum of ribbons with exactly half having an even number of rows and half having an odd number of rows.  Furthermore, all linear relations among ribbons are generated in this way (see Proposition 2.2 of \cite{BTvW}).  So, writing any given connected ribbon in terms of a fixed basis of connected ribbons does not change the difference between the number (weighed by their coefficients) of ribbons in the sum with an odd number of rows and the number with an even number of rows.  This gives the desired result in the case of disconnected ribbons.

\medskip

Consider a non-ribbon.  
Given a connected shape containing at least one $(2,2)$, say the boxes above or to the right of the upper rightmost $(2,2)$ are the right ribbon and say the boxes below or to the left of the lower leftmost $(2,2)$ are the left ribbon.
This proof is a triple induction, first on the number of $(2,2)$s inside the shape, second within connected shapes with the same number of $(2,2)$, by sum of the sizes of the left and right ribbons, and third within shapes with the same total size of left and right ribbon, by the size of the left ribbon.  The base case for the outer induction is the ribbons, connected and disconnected.  

Given a non-ribbon shape with $k$ $(2,2)$s.  If the shape is connected and has no right or left ribbons, add a box to the end of the first row.  By cocommutativity the results of taking out one box on the right in all possible ways must equal the results of taking out one box on the left in all possible ways.  One of the ways of taking out one box on the right gives the shape itself; all other ways of taking out one box on either the left or the right either remove a $(2,2)$ or disconnect the shape.  Since the shape had no right or left ribbons, in the case where the shape becomes disconnected, each connected component must contain at least one $(2,2)$ and so each component has fewer than $k$ $(2,2)$s.   To summarize, cocommutativity gives an equality between two sums of shapes.  One of the shapes in one of the sums is the shape we began with while the others have fewer $(2,2)$s.  Solving for the shape itself, we obtain the shape itself as a sum of other shapes where each connected component has fewer $(2,2)$s.  Notice that any time a connected component has only one $(2,2)$ then this $(2,2)$ can be broken in exactly two ways: one by taking out a box on the right and one by taking out a box on the left and both ways give ribbons with the same number of rows.  Thus when the sum gives ribbons they will appear in pairs with opposite sign and same row parity.  By induction and the result for products of ribbons, the result holds in the case of shapes with $k$ $(2,2)$s and no left or right ribbons.

Now consider a connected shape with $k$ $(2,2)$s and no left ribbon.  Again add a box to the end of the first row.  Again, by cocommutativity, the results of taking out one box on the right in all possible ways must equal the results of taking out one box on the left in all possible ways, and one of the results of taking out one box on the right is the shape itself.  Taking out any other single box either removes a $(2,2)$, which is fine by induction, or disconnects the shape.  If disconnecting the shape results in components each with at least one $(2,2)$ then again we're fine by induction.  The remaining possibility is that the box removed was in the right ribbon which results in a connected component with no left ribbon and a smaller right ribbon.  Solving for the shape itself, similarly to the previous paragraph, this case holds by induction.

Finally take any shape with $k$ $(2,2)$s.  Assume the result holds for any shape with fewer $(2,2)$s (outer induction), for any shape with smaller total size of left and right ribbons (middle induction), and for any shape with the same total size of left and right ribbons but with smaller left ribbon (inner induction).  The set up is the same as before.  Call the connected component that contains the top row the top component.  Again one way of taking out one box on the right is the shape itself.  Taking out any other single box does one of the following:
\begin{itemize}
  \item removes a $(2,2)$; the result holds on this piece by the outer induction
  \item removes a box from the left or right ribbon of a component that is not the top component; the result holds on this piece by the middle induction
  \item removes a box from the left ribbon of the top component; the result holds on this piece by the inner induction
  \item creates a new connected component; the result holds on this piece by the outer induction if each new component contains a $(2,2)$ and by the middle induction otherwise.
\end{itemize}
Solving for the shape itself completes the proof.
\end{proof}

\begin{lemma}\label{only multiple is 1}
  Let $s$ and $t$ be connected ribbon Schur functions with the same number of rows.  If $s$ is a nonzero scalar multiple of $t$ then $s=t$ 
\end{lemma}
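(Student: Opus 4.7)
The plan is to use the signed row-count invariant that is implicit in the proof of Lemma~\ref{signed sum}. Suppose $s = ct$ for some nonzero scalar $c$, and fix an ordered basis $\mathcal{B}$ of symmetric functions of the common degree consisting of connected ribbon Schur functions (such a basis exists since connected ribbon Schur functions span, as recalled in the preliminaries). Let $v$ be the sign vector from Lemma~\ref{signed sum}: $+1$ in coordinates indexed by basis elements with an even number of rows and $-1$ in coordinates indexed by those with an odd number of rows. I will evaluate $v \cdot \overline{s}_{\mathcal{B}}$ and $v \cdot \overline{t}_{\mathcal{B}}$ and read off $c$ from the comparison.

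The key observation I would extract from the first paragraph of the proof of Lemma~\ref{signed sum} is that for any connected ribbon Schur function $r$ with $k$ rows, $v \cdot \overline{r}_{\mathcal{B}} = (-1)^k$ (using the convention $(-1)^{\text{even}} = +1$), independently of which connected-ribbon basis $\mathcal{B}$ is chosen. Indeed, the product of any two connected ribbons is a sum of two connected ribbons whose row counts differ by $1$, so each such product has signed count $0$; by Proposition 2.2 of \cite{BTvW} these products generate all linear relations among ribbons, and therefore rewriting $r$ in a different connected-ribbon basis preserves the signed count. Evaluating in any basis that contains $r$ itself shows that this preserved value must be $(-1)^k$.

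Applying this to $s$ and $t$, which by hypothesis share a common row count $k$, yields $v \cdot \overline{s}_{\mathcal{B}} = v \cdot \overline{t}_{\mathcal{B}} = (-1)^k \neq 0$. Combining this with $\overline{s}_{\mathcal{B}} = c\,\overline{t}_{\mathcal{B}}$ forces $(-1)^k = c \cdot (-1)^k$, so $c = 1$ and hence $s = t$. The only subtlety I expect is that Lemma~\ref{signed sum} as stated excludes connected ribbons, so one has to look inside its proof to see that the same argument yields the \emph{definite} nonzero value $(-1)^k$ in that case; once this is noted, no further obstacle remains.
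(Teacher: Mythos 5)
Your proposal is correct and follows essentially the same route as the paper: both rely on the invariance (extracted from the proof of Lemma~\ref{signed sum}) of the signed row-parity count under rewriting a connected ribbon in a connected-ribbon basis, and then compare this value for $s$ and $ct$ to force $c=1$. The only cosmetic difference is that you evaluate the invariant as the explicit value $(-1)^k$ in an arbitrary connected-ribbon basis, whereas the paper chooses a basis containing $s$ and reads off the coefficient directly.
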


\begin{proof}
Take a basis of connected ribbon Schur functions that includes $s$.  As noted in the proof of Lemma \ref{signed sum}, 
writing any given connected ribbon in terms of a fixed basis of connected ribbons does not change the coefficient-weighted difference between the number of ribbons in the sum with an odd number of rows and the number with an even number of rows.  Writing $t$ in terms of the basis the only ribbon in this sum is $s$ and both $s$ and $t$ have the same number of rows so the weight must be $1$.
\end{proof}

\begin{figure}
\includegraphics{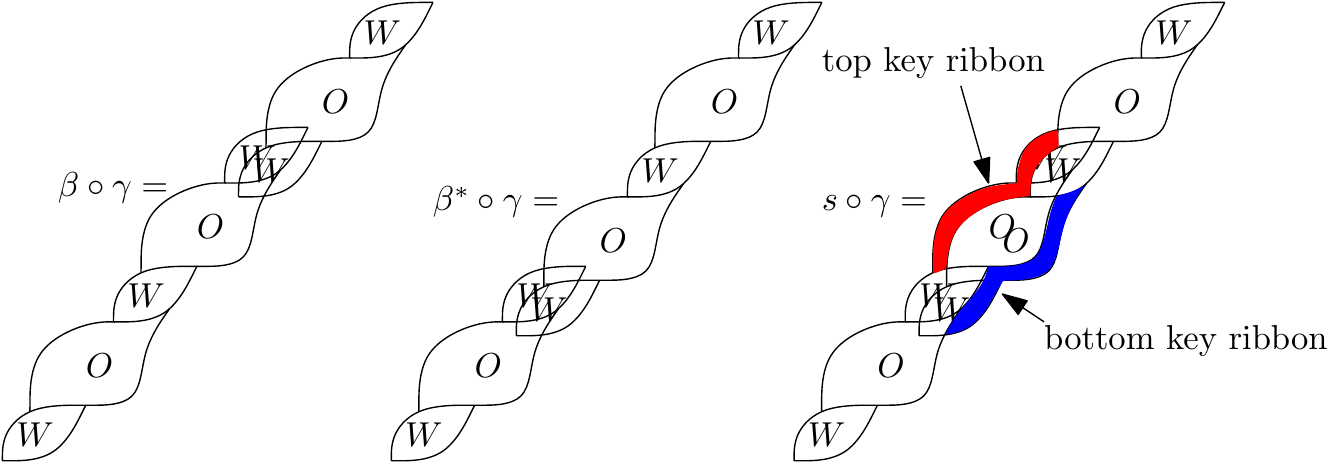}
\caption{An example of the position of the key ribbons in the set up of Proposition~\ref{same key prop}}\label{eg fig}
\end{figure}

\begin{prop}\label{same key prop}
  Let $\beta$ be a partition shape that is a rectangle with the lower right corner box removed.  Let $\gamma$ be a $W\rightarrow O \rightarrow W$ or $W\uparrow O \uparrow W$ shape with no loose end ribbons.  Suppose the top key ribbon and the bottom key ribbon of $\gamma$ are the same.  Then $\beta \circ \gamma \sim \beta^* \circ \gamma$. 
\end{prop}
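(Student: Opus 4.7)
The plan is to pass from the shape Hopf algebra to the symmetric function Hopf algebra, where cocommutativity provides a symmetrizing tool, and to use the uniqueness properties of the top and bottom key ribbons (Lemma~\ref{one key}) to collapse coproduct expressions.

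First, using Lemma~\ref{signed sum}, define a linear functional $\phi$ on the degree-$|r|$ piece of the symmetric function Hopf algebra as the signed-by-row-parity sum in any ordered basis of connected ribbon Schur functions, normalized so that $\phi(s_r) = 1$. By Lemma~\ref{signed sum}, $\phi$ vanishes on Schur functions of shapes that are not connected ribbons, and on connected ribbons of size $|r|$ it is $\pm 1$ with sign determined by row-parity. Apply $(\phi \otimes \mathrm{id})\Delta$ to $s_{\beta \circ \gamma}$ using the shape Hopf algebra coproduct formula. Every term whose first tensor factor is not a size-$|r|$ connected ribbon is annihilated by $\phi$, and by Lemma~\ref{one key}(1) applied to $\lambda = \beta$ the only surviving term is $r$ extracted at the top-left copy of $\gamma$; this gives $(\phi \otimes \mathrm{id})\Delta(s_{\beta \circ \gamma}) = s_X$, with $X$ the shape of $\beta \circ \gamma$ after removing $r$ there. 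The analogous computation with $(\mathrm{id} \otimes \phi)\Delta$, using Lemma~\ref{one key}(2) applied to $\lambda = \beta$ together with the hypothesis $r = s$ (and Lemma~\ref{size lemma} ensuring row counts match), yields $(\mathrm{id} \otimes \phi)\Delta(s_{\beta^* \circ \gamma}) = s_Y$, with $Y$ defined analogously at the bottom-right of $\beta^* \circ \gamma$.

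Cocommutativity, which in symmetric functions reads $(\phi \otimes \mathrm{id})\Delta = (\mathrm{id} \otimes \phi)\Delta$, then lets us re-express $s_X$ as a signed sum over right-extractions of size-$|r|$ connected ribbons from $\beta \circ \gamma$, and $s_Y$ as the corresponding signed sum over left-extractions from $\beta^* \circ \gamma$. The core of the argument is to match these two signed sums term by term using the rectangle-minus-corner structure of $\beta$: the positions where a size-$|r|$ connected ribbon can sit in the bottom-right region of $\beta \circ \gamma$ correspond to the positions where one can sit in the top-left region of $\beta^* \circ \gamma$, the no-loose-end-ribbons hypothesis guarantees no extra contributions on either side, and Lemma~\ref{only multiple is 1} resolves any ambiguity between distinct ribbon Schur functions with the same row count. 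Combining the matching sums forces $s_{\beta \circ \gamma} = s_{\beta^* \circ \gamma}$.

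The main obstacle I expect is the combinatorial matching in the last step. Knowing $s_X = s_Y$ alone would not suffice, since $(\phi \otimes \mathrm{id})\Delta$ is not injective on symmetric functions; what cocommutativity actually delivers is an equality of signed sums, and carrying out the term-by-term matching requires a careful geometric analysis of how size-$|r|$ connected ribbons fit in the opposite corner regions of the two compositions. The rectangle-minus-corner shape $\beta$ is precisely calibrated so that this matching closes with no residual ribbon on either end.
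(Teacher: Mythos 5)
There is a genuine gap, and it is the central one: you apply the coproduct to the wrong objects. Your functional $\phi$ (the row-parity-signed sum, which by Lemma~\ref{signed sum} kills everything except connected ribbons) is a perfectly good repackaging of the bookkeeping the paper does with its matrices $L$ and $R$, and your use of Lemma~\ref{one key} to isolate the key ribbon is also fine. But you apply $(\phi\otimes\mathrm{id})\Delta$ and $(\mathrm{id}\otimes\phi)\Delta$ to $s_{\beta\circ\gamma}$ and $s_{\beta^*\circ\gamma}$ themselves. Every term you produce this way is (the Schur function of) a shape of size $|\beta\circ\gamma|-|r|$; the symmetric functions $s_{\beta\circ\gamma}$ and $s_{\beta^*\circ\gamma}$ never appear as terms in any identity you derive, so no amount of matching of these signed sums can "force $s_{\beta\circ\gamma}=s_{\beta^*\circ\gamma}$". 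The missing idea is an auxiliary \emph{larger} shape: let $\sigma$ be the full rectangle ($\beta$ with its corner box restored) and work with $\Delta(\sigma\circ\gamma)$. Removing the top key ribbon on the left of $\sigma\circ\gamma$ leaves one of the two target shapes and removing the bottom key ribbon on the right leaves the other, and Lemma~\ref{one key} (applied with $\lambda=\sigma$, which is a partition, and $\lambda^*=\sigma^*$) says these are the \emph{only} connected-ribbon extractions of that size on either side. Then your functional argument, applied to $\Delta(\sigma\circ\gamma)$, together with cocommutativity, directly equates $s_{\beta\circ\gamma}$ and $s_{\beta^*\circ\gamma}$ — this is exactly the paper's proof, with your $\phi$ playing the role of its signed column sums.

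Two further problems in your version confirm that it cannot be patched as written. First, Lemma~\ref{one key} controls left extractions from $\lambda\circ\gamma$ and right extractions from $\lambda^*\circ\gamma$ only; it says nothing about right extractions from $\beta\circ\gamma$ or left extractions from $\beta^*\circ\gamma$, which are precisely the sums your cocommutativity step produces. Near the notch created by the missing corner of $\beta$ there are in general several size-$|r|$ connected ribbons that come out, so these sums are genuinely complicated. Second, even if you had a bijection between the extraction positions on the two sides, matching the sums term by term would require knowing that the corresponding complementary skew Schur functions are equal — a statement of the same nature as (and not easier than) the proposition you are trying to prove, so the "careful geometric analysis" you defer is not merely technical but circular. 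The rectangle-minus-corner hypothesis is not calibrated to make that matching close; it is calibrated so that the overlay of $\beta\circ\gamma$ and $\beta^*\circ\gamma$ is the single shape $\sigma\circ\gamma$, which is what your argument is missing.
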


\begin{proof}
Let $s$ be $\beta$ with the lower right corner filled.  Note that if we lay $\beta \circ \gamma$ and $\beta^* \circ \gamma$ on top of each other with the copies of $\gamma$ corresponding to common boxes of $\beta$ and $\beta^*$ identified then the resulting shape is $s\circ \gamma$.  Furthermore, those boxes of $s\circ \gamma$ that are not in $\beta \circ \gamma$ are the unique copy of top key ribbon of $\gamma$ that can be removed from $s\circ \gamma$ on the left as described in Lemma~\ref{one key}.  Symmetrically, those boxes of $s\circ \gamma$ that are not in $\beta^* \circ \gamma$ are the unique copy of bottom key ribbon of $\gamma$ that can be removed from $s\circ \gamma$.  Let $\alpha$ be the key ribbon of $\gamma$.  See Figure~\ref{eg fig} for an illustration for $\beta = (2,1)$ and $\gamma = W\uparrow O \uparrow W$.

Consider $\Delta(s \circ \gamma)$.  Working in the shape Hopf algebra, we see that in $\Delta(s \circ \gamma)$ we have terms $\alpha \otimes (\beta^* \circ \gamma)$ and $(\beta \circ \gamma) \otimes \alpha$, and by Lemma~\ref{one key} no other terms are of the form $\alpha \otimes A$ or $A \otimes \alpha$.  
We now need some notation.  Given a shape $\lambda$ let $\lambda \otimes R_\lambda$ be the sum of all terms of the form $\lambda\otimes A$ in $\Delta(s \circ \gamma)$ and let $L_\lambda \otimes \lambda$ be the sum of all terms of the form $A \otimes \lambda$ in $\Delta(s\circ \gamma)$.

\medskip

Moving to the symmetric functions Hopf algebra for the rest of the proof, let $\mathcal{B}$ be a basis for symmetric functions of degree $|\alpha|$ that includes the Schur function indexed by $\alpha$ and for which all other elements of the basis are also connected ribbon Schur functions.  This is possible since the connected ribbon Schur functions span symmetric functions.

Build two matrices $L$ and $R$ with entries in the symmetric function Hopf algebra.  $L$ and $R$ will each have rows indexed by the shapes of size $|\alpha|$ that are not connected ribbons and columns indexed by the elements of $\mathcal{B}$.  Given a shape $\lambda$ that is not a connected ribbon, the row corresponding to $\lambda$ in $L$ is $L_\lambda \overline{\lambda}_{\mathcal{B}}$ in $R$ is $R_\lambda\overline{\lambda}_{\mathcal{B}}$.  Note that $L_\lambda$ and $R_\lambda$ are linear combinations of shapes, so are scalars in this context, while $\overline{\lambda}_{\mathcal{B}}$ is a rational vector. 

By construction the row of $R$ corresponding to $\lambda$ gives the contributions of $\lambda$ to terms of the form $b\otimes A$ with $b\in \mathcal{B}$ in $\Delta(s \circ \gamma)$.  The same holds for $L$ with $b\otimes A$ replaced by $A\otimes b$.  Given $b\in \mathcal{B}$, $b\neq \alpha$, by Lemma~\ref{one key} there is no contribution to the $b\otimes A$ terms of $\Delta(s \circ \gamma)$ from $b$ itself or from any other connected ribbon.  Therefore the full contribution of the form $b\otimes A$ is given by the sum of the entries of the $b$ column of $R$.  Similarly the full contribution of the form $A\otimes b$ is given by the sum of the entries of the $b$ column of $L$.  By cocommutativity of the symmetric function Hopf algebra the sum of the entries of the $b$ column of $R$ is equal to the sum of the entries of the $b$ column of $L$ for $b \in \mathcal{B}$, $b\neq \alpha$.

By Lemma~\ref{signed sum} the column of $L$ corresponding to $\alpha$ is a signed sum of the other columns $L$ and likewise, with the identical linear combination, for $R$.  Therefore, using the observation of the previous paragraph, the sum of the entries of the $\alpha$ column of $R$ is equal to the sum of the entries of the $\alpha$ column of $L$.

The full contributions to terms involving $\alpha$ in $\Delta(s\circ \gamma)$ must also include the direct contribution from taking out $\alpha$ on the top and bottom which was calculated earlier in the proof.  Taking everything together, the full contribution to the terms of the form $\alpha \otimes A$ in $\Delta(s \circ \gamma)$ is the sum of the $\alpha$ column of $R$ plus $\beta^* \circ \gamma$ and the full contribution to the terms of the form $A \otimes \alpha$ in $\Delta(s \circ \gamma)$ is the sum of the $\alpha$ column of $L$ plus $\beta \circ \gamma$.  Therefore by cocommutativity $\beta^* \circ \gamma$ and $\beta \circ \gamma$ are equal as symmetric functions.  That is, $\beta^* \circ \gamma \sim \beta \circ \gamma$.

\end{proof}

\begin{thm}\label{main thm}
  Let $\beta$ be a partition shape that is a rectangle with the lower right corner box removed.  Let $\gamma$ be a $W\rightarrow O \rightarrow W$ or $W\uparrow O \uparrow W$ shape with no loose end ribbons.  
Then $\beta \circ \gamma \sim \beta^* \circ \gamma$. 
\end{thm}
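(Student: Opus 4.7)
The plan is to extend the argument of Proposition~\ref{same key prop}, which assumes the top and bottom key ribbons of $\gamma$ coincide, to the general case where they need not. Write $r$ for the top key ribbon and $s$ for the bottom key ribbon of $\gamma$. I keep the same geometric setup: let $S$ denote $\beta$ with its lower right corner restored, so that $S \circ \gamma$ superposes $\beta \circ \gamma$ and $\beta^* \circ \gamma$, and Lemma~\ref{one key} identifies the boxes of $S \circ \gamma$ missing from $\beta \circ \gamma$ as the unique connected ribbon of size $|r|$ that can be taken out on the left, namely $r$, and symmetrically for $s$ on the right. Consequently $\Delta(S \circ \gamma)$ has two distinguished direct contributions $r \otimes (\beta^* \circ \gamma)$ and $(\beta \circ \gamma) \otimes s$.

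Passing to the symmetric function Hopf algebra, I will choose an ordered basis $\mathcal{B}$ of connected ribbon Schur functions of degree $|r| = |s|$ that contains both $r$ and $s$, and construct the matrices $L$ and $R$ from Proposition~\ref{same key prop}, whose rows are indexed by non-ribbon shapes of the same degree. Writing $L_b$ and $R_b$ for the column sums, cocommutativity of the symmetric function Hopf algebra, together with the uniqueness statement of Lemma~\ref{one key}, will give $L_b = R_b$ for every $b \in \mathcal{B}$ with $b \notin \{r, s\}$, while the two distinguished contributions produce the offsets $L_r - R_r = \beta^* \circ \gamma$ and $R_s - L_s = \beta \circ \gamma$.

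To exploit these offsets, I will apply Lemma~\ref{signed sum} to each row, which implies $\sum_b v_b R_b = 0$ and $\sum_b v_b L_b = 0$, where $v_b \in \{\pm 1\}$ is determined by the parity of the number of rows of $b$. Subtracting the two relations and using that $L_b - R_b$ vanishes except at $b = r$ and $b = s$ will collapse everything to $v_r (\beta^* \circ \gamma) = v_s (\beta \circ \gamma)$ in the symmetric function Hopf algebra. Lemma~\ref{size lemma} then completes the proof: the top and bottom key ribbons share the same number of rows, so $v_r = v_s$ and $\beta \circ \gamma \sim \beta^* \circ \gamma$.

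The main obstacle is the case $r \neq s$, where the two direct contributions no longer sit in a single column and cannot be consolidated as in Proposition~\ref{same key prop}. The only way the subtracted parity relation can then be conclusive is if the signs $v_r$ and $v_s$ agree, which is exactly the equal-row-count content of Lemma~\ref{size lemma}; this is why that lemma was pushed forward as its own statement. When $r = s$ the same bookkeeping specializes back to Proposition~\ref{same key prop}, so a single uniform argument handles both cases.
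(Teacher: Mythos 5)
Your core computation is sound, and your endgame is organized differently from the paper's. The paper, after the same superposition of $\beta\circ\gamma$ and $\beta^*\circ\gamma$ inside $S\circ\gamma$ and the same matrices $L$ and $R$, replaces the basis element given by the bottom key ribbon $s$ by $s-r$ and works with a sign vector modified to vanish on that new column, so that both direct contributions end up in the $r$ column; you instead keep the original basis and take the $v$-weighted sum of all the column identities: cocommutativity together with Lemma~\ref{one key} gives $L_b=R_b$ for $b\notin\{r,s\}$ and the two offsets $L_r-R_r=\beta^*\circ\gamma$, $R_s-L_s=\beta\circ\gamma$, while Lemma~\ref{signed sum}, applied row by row and summed, gives $\sum_b v_bL_b=\sum_b v_bR_b=0$; subtracting leaves $v_r(\beta^*\circ\gamma)=v_s(\beta\circ\gamma)$, and $v_r=v_s$ by Lemma~\ref{size lemma}. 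This is a valid and arguably cleaner finish, since it avoids the change of basis entirely; the one thing to say precisely is that the rows of $L$ and $R$ must run over all shapes of size $|r|$ that are not \emph{connected} ribbons, disconnected ribbons included, since those do occur as tensor factors and are exactly the shapes Lemma~\ref{signed sum} annihilates.

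There is one missing case. You build a basis of connected ribbon Schur functions containing both $r$ and $s$, which is possible only when the Schur functions indexed by $r$ and $s$ are linearly independent. It can happen that $r\neq s$ as shapes while their Schur functions are linearly dependent, and then no such basis exists. The paper disposes of this first, invoking Lemma~\ref{only multiple is 1} to conclude that in the dependent case the two Schur functions are actually equal (the scalar is forced to be $1$), after which the argument of Proposition~\ref{same key prop} applies verbatim. Your uniform bookkeeping also covers it once you note that $r$ and $s$ then occupy the same basis column, so the two offsets merge into $L_r-R_r=\beta^*\circ\gamma-\beta\circ\gamma$ and the same signed sum gives the result; but this case, and the appeal to Lemma~\ref{only multiple is 1} that pins the proportionality constant to $1$, must be stated rather than absorbed into ``$r=s$''.
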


\begin{proof}
Let $\alpha_1$ be the top key ribbon of $\gamma$ and let $\alpha_2$ be the bottom key ribbon of $\gamma$.  If the Schur functions indexed by $\alpha_1$ and $\alpha_2$ are not linearly independent then by Lemma~\ref{only multiple is 1} they are equal.  In that case we can argue as in the proof of Proposition~\ref{same key prop}.

Now assume the Schur functions indexed by $\alpha_1$ and $\alpha_2$ are linearly independent.  Let $\mathcal{B}$ be a basis for symmetric functions of degree $|\alpha|$ that includes the Schur functions indexed by $\alpha_1$ and $\alpha_2$ and for which all other elements of the basis are also connected ribbon Schur functions.  Build the matrices $L$ and $R$ as in the proof of Proposition~\ref{same key prop}.

$\mathcal{B}$ is not the correct basis for completing this argument because the top and bottom key ribbons are contributing to different basis elements hence to different columns.  In order to fix this we will modify $\mathcal{B}$ in order to form a new basis $\mathcal{B}'$.  Once that has been done, the rest of the argument will proceed as for Proposition~\ref{same key prop} using $\mathcal{B}'$ in place of the original $\mathcal{B}$.

Let $\mathcal{B}'$ be the basis formed by removing $\alpha_2$ from $\mathcal{B}$ and replacing it by $\alpha_2-\alpha_1$.  Let $L'$ and $R'$ be the corresponding matrices for $\mathcal{B}'$.  The columns other than the column for $\alpha_2$ (now $\alpha_2-\alpha_1$) are unchanged.  So for any $b\in \mathcal{B}'$ with $b\neq \alpha_1$ and $b\neq \alpha_2-\alpha_1$, by Lemma~\ref{one key} and cocommutativity the sum of the entries of the $b$ column of $R'$ is equal to the sum of the entries of the $b$ column of $L'$.

Let $v$ be the vector of length $|\mathcal{B}|$ that is $1$ if the corresponding entry of $\mathcal{B}$ has an even number of rows and is $-1$ if the corresponding entry of $\mathcal{B}$ has an odd number of rows.  Let $v'$ be $v$ with the entry corresponding to $\alpha_2$ ($\alpha_2-\alpha_1$ in $\mathcal{B}'$) set to $0$.  Take any shape $\lambda$ of size $|\alpha_1|$.  The $\alpha_1$ entry of $\overline{\lambda}_{\mathcal{B}'}$ is the sum of the $\alpha_1$ and $\alpha_2$ entries of $\overline{\lambda}_\mathcal{B}$ while the $\alpha_2-\alpha_1$ entry of $\overline{\lambda}_{\mathcal{B}'}$ is the difference between the $\alpha_1$ and $\alpha_2$ entries of $\overline{\lambda}_\mathcal{B}$.  By Lemma~\ref{size lemma} $\alpha_1$ and $\alpha_2$ have the same weight in $v$ and so Lemma~\ref{signed sum} implies that $v' \cdot \overline{\lambda}_{\mathcal{B}'} = 0$.

Consequently, the column of $L'$ corresponding to $\alpha_1$ is a signed sum of the other columns of $L'$ which does not use the $\alpha_2-\alpha_1$ column and likewise for $R'$.  Therefore the sum of the entries of the $\alpha_1$ column of $R'$ is equal to the sum of the entries of the $\alpha_1$ column of $L'$.

Finally, $\alpha_1$ taken out on the left contributes $\alpha_1 \otimes (\beta^* \circ \gamma)$ and $\alpha_2$ taken out on the right contributes $(\beta \circ \gamma) \otimes (\alpha_2 - \alpha_1) + (\beta \circ \gamma) \otimes \alpha_1$.  Considering the $\alpha_1$ contributions on both sides of the tensor, by the previous paragraph and cocommutativity, as in the proof of Proposition~\ref{same key prop}, we get that $\beta^* \circ \gamma$ and $\beta \circ \gamma$ are equal as symmetric functions.  That is, $\beta^* \circ \gamma \sim \beta \circ \gamma$.
\end{proof}

Note that by \cite{mNvW} Lemma 3.19 ii (which is just an elementary check) $(\beta \circ_W \gamma)^* = \beta^* \circ_{W^*} \gamma^*$ and ${}^*$ swaps $W\rightarrow O \rightarrow W$ and $W\uparrow O\uparrow W$ shapes.  So since rotating a shape 180 degrees does not affect the Schur function, applying ${}^*$ to $\beta^* \circ \gamma$ in Theorem~\ref{main thm} gives the following corollary.
\begin{cor}
Let $\beta$ be a partition shape that is a rectangle with the lower right corner box removed.  Let $\gamma$ be a $W\rightarrow O \rightarrow W$ or $W\uparrow O \uparrow W$ shape with no loose end ribbons.  Then $\beta \circ \gamma \sim \beta \circ \gamma^*$.
\end{cor}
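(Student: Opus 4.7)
The plan is to chain Theorem~\ref{main thm} together with the $*$-duality $(\beta\circ_W\gamma)^*=\beta^*\circ_{W^*}\gamma^*$ (\cite{mNvW} Lemma 3.19 ii) and the elementary fact that $\alpha\sim\alpha^*$ for any shape $\alpha$. In effect the corollary just records what happens when the $*$ operation is applied to the right-hand side of Theorem~\ref{main thm} instead of the left-hand side.

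Concretely, first apply Theorem~\ref{main thm} to obtain $\beta\circ\gamma\sim\beta^*\circ\gamma$. Next, since $180$-degree rotation preserves Schur functions, $\beta^*\circ_W\gamma\sim(\beta^*\circ_W\gamma)^*$. Finally, expand the right side by the cited lemma: $(\beta^*\circ_W\gamma)^*=(\beta^*)^*\circ_{W^*}\gamma^*=\beta\circ_{W^*}\gamma^*$. Chaining the three equivalences yields $\beta\circ\gamma\sim\beta\circ\gamma^*$, which is the statement of the corollary (the $W$ on the right is implicitly $W^*$, exactly as in the paragraph preceding the corollary).

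To make the application of Theorem~\ref{main thm} legitimate, one must also note that the hypotheses transfer: the $*$ operation swaps the two $WOW$-types (as stated right before the corollary), and the definition of \emph{no loose end ribbons} is symmetric under $180$-degree rotation (left/right and top/bottom roles are swapped), so $\gamma^*$ automatically satisfies the hypotheses whenever $\gamma$ does. This latter verification is an immediate inspection of Definition~\ref{def loose ends} and the definitions of the top and bottom key ribbons, and presents no real difficulty. The only mild subtlety in the whole argument is keeping track of which copy of $W$ (i.e.\ $W$ versus $W^*$) is used in each composition, but once that bookkeeping is done the corollary follows in a single line from Theorem~\ref{main thm}.
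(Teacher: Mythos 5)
Your proof is correct and takes essentially the same route as the paper: apply Theorem~\ref{main thm} to get $\beta\circ\gamma\sim\beta^*\circ\gamma$, then rotate and use $(\beta^*\circ_W\gamma)^*=\beta\circ_{W^*}\gamma^*$ together with $\alpha\sim\alpha^*$. (Your extra check that the no-loose-end-ribbon hypothesis transfers to $\gamma^*$ is harmless but unnecessary, since the theorem is only ever applied to $\gamma$ itself.)
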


\section{Discussion}

These results are interesting for two reasons.  First of all the tools they use are quite different from what has been used for similar results (see \cite{vWeq, BTvW, mNvW}).  The tools of this paper don't require heavy algebraic machinery, using primarily just the fact of cocommutativity of the symmetric function Hopf algebra.

Secondly, although there are many identities obtainable with other methods which are not obtainable by the present methods, other identities are obtainable by the present methods but not by other methods.
Most interesting along these lines is to compare Theorem~\ref{main thm} with the main results of \cite{mNvW}.  Those authors are able to tackle a much larger class of shapes since they do not need to restrict the shape of $\beta$ nearly as heavily.  However, they do require a technical hypothesis, called hypothesis V, which they conjecture to be unnecessary.  They give an example which they cannot obtain by their methods because of hypothesis V, illustrated here in Figure~\ref{end eg fig}.  This example follows from Theorem~\ref{main thm} with $\beta=(2,1)$ and $\gamma=(4,4,2,2)/(2,1) = (1,1) \rightarrow (2,2,1,1)/(1) \rightarrow (1,1)$.  Hence it gives an example obtainable by the methods of this paper, but not by the methods of \cite{mNvW}.

\begin{figure}
\includegraphics{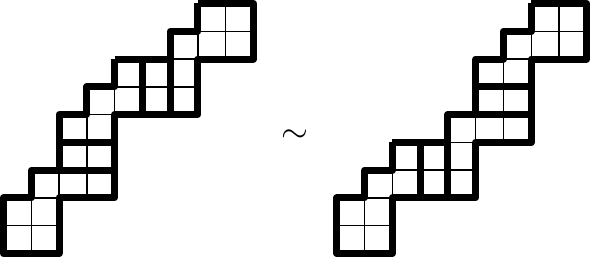}
\caption{An example of McNamara and van Willigenburg which can be done by the methods of this paper but not by the methods of \cite{mNvW}.}\label{end eg fig}
\end{figure}

Finally, it is natural to ask whether the present methods can be generalized.  One question is whether the restrictions on $\beta$ can be lessened.  The simplicity of the key ribbons depends on the special shape of $\beta$.  The difficulty in generalizing is in finding an appropriate generalization of Lemma~\ref{one key}; once we go beyond key ribbons, there can be more than one shape of the same size that can be taken out.

The other unpleasant hypothesis was the hypothesis of no loose end ribbons.  Some hypothesis is required in place of the no loose end ribbon hypothesis.  To see this, take $\beta = (2,1)$ and $\gamma = (8,7,2)/(3,1)$, then $\gamma$ is a $W\rightarrow O \rightarrow W$ shape,\footnote{Thanks to the referee for this example.} however $\beta\circ\gamma \not\sim \beta^*\circ \gamma$.  The key ribbons have size 6 but there is another ribbon of size 6 which can be removed from $\gamma$ to the left of the upper key ribbon and so this shape has a loose end ribbon.  Ideally, the no loose end ribbon hypothesis would be replaced by something less obscure.
Note that it also makes sense in view of the results of McNamara and van Willigenburg that some additional hypothesis should be necessary because their hypothesis IV has not been incorporated in this Hopf setup.  Hypothesis IV is what fails for McNamara and van Willigenburg in the previous example.


\bibliographystyle{plain}
\bibliography{main}

\end{document}